\definecolor{blau}{rgb}{0,0,0.75} 
\def\RR{{\mathbb {R}}}
\def\NN{{\mathbb {N}}}
\DeclareMathOperator{\Li}{Li}
\DeclareMathOperator{\stuffi}{stuffle}
\newtheorem{coroll}{Corollary}
\newtheorem{theorem}{Theorem}
\newtheorem{lemma}{Lemma}
\theoremstyle{definition}
\newtheorem{remark}{Remark}
\title{On functions of Arakawa and Kaneko and multiple zeta functions}
\author[M.~Kuba]{Markus Kuba}
\address{Markus Kuba\\
Institut f{\"u}r Diskrete Mathematik und Geometrie\\
Technische Universit\"at Wien\\
Wiedner Hauptstr. 8-10/104\\
1040 Wien, Austria} %
\email{kuba@dmg.tuwien.ac.at}
\begin{document}

\begin{abstract}
We study for $s\in\NN$ the functions $\xi_{k}(s)=\frac{1}{\Gamma(s)}\int_{0}^{\infty}\frac{t^{s-1}}{e^t-1}\Li_{k}(1-e^{-t})dt$, and more generally $\xi_{k_1,\dots,k_r}(s)=\frac{1}{\Gamma(s)}\int_{0}^{\infty}\frac{t^{s-1}}{e^t-1}\Li_{k_1,\dots,k_r}(1-e^{-t})dt$, introduced by Arakawa and Kaneko~\cite{Arakawa} and relate them with (finite) multiple zeta functions, partially answering a question of~\cite{Arakawa}. In particular, we give an alternative proof of a result of Ohno~\cite{Ohno2}.
\end{abstract}

\keywords{Multiple zeta functions, finite multiple zeta functions, multiple zeta star functions, finite stuffle identity}%
\subjclass[2000]{11M06, 40B05}

\maketitle

\section{Introduction}
\noindent Let $\Li_{k_1,\dots,k_r}(z)$ denote the multiple polylogarithm function 
defined by
\begin{equation*}
\Li_{k_1,\dots,k_r}(z)=\sum_{n_1>n_2>\dots>n_r\ge 1}\frac{z^{n_1}}{n_1^{k_1}n_2^{k_2}\dots n_{r}^{k_r}},
\end{equation*}
with $k_1\in\NN\setminus\{1\}$ and $k_i\in\NN=\{1,2,\dots\}$, $2\le i\le r$, and $|z|\le 1$.
For $z=1$ the multiple polylogarithm function $\Li_{k_1,\dots,k_r}(1)=\zeta(k_1,\dots,k_r)$ simplifies to a multiple zeta function, also called multiple zeta value, where $\zeta(k_1,\dots,k_r)$ and $\zeta_N(k_1,\dots,k_r)$ denote the (finite) multiple zeta function defined by 
\begin{equation*}
\begin{split}
    \label{zeta}
      \zeta(k_1,\dots,k_{r}) &= \sum_{  n_1 > n_2 > \dots >  n_{r}\ge 1}\frac{1}{n_1^{k_1} n_2^{k_2} \dots n_{r}^{k_{r}}},\\
  \zeta_{N}(k_1,\dots,k_{r}) &= \sum_{ N\ge n_1 > n_2 > \dots >
    n_{\ell} \ge 1}\frac{1}{n_1^{ka_1} n_2^{k_2} \dots n_{r}^{k_{r}}},
\end{split}
\end{equation*}
with $k_1\in\NN\setminus\{1\}$, and $k_2,\dots,k_r\in\NN$ for the infinite series and $N,k_1,\dots,k_r\in\NN$ for the finite counterpart.
Arakawa and Kaneko~\cite{Arakawa} introduced the function $\xi_{k}(s)$, and the more general function $\xi_{k_1,\dots,k_r}(s)$,
defined by
\begin{equation*}
\begin{split}
\xi_{k}(s)&=\frac{1}{\Gamma(s)}\int_{0}^{\infty}\frac{t^{s-1}}{e^t-1}\Li_{k}(1-e^{-t})dt,\\
\xi_{k_1,\dots,k_r}(s)&=\frac{1}{\Gamma(s)}\int_{0}^{\infty}\frac{t^{s-1}}{e^t-1}\Li_{k_1,\dots,k_r}(1-e^{-t})dt,
\end{split}
\end{equation*}
respectively, being absolut convergent for $\Re(s)>0$, and related them for special choices of $s$ and $k_1,\dots k_r$ to multiple zeta functions.
Ohno~\cite{Ohno2} obtained a result for $\xi_{k}(n)$, with $n\in\NN$, using his generalization of the duality and sum formulas for
multiple zeta functions.

\smallskip

We will provide for $n\in\NN$ evaluations of the function $\xi_{k_1,\dots,k_r}(n)$ to multiple zeta functions, partially answering a question of~\cite{Arakawa}; in particular we give a short and simple proof of Ohno's result~\cite{Ohno2}. For the evaluation of the general case $\xi_{k_1,\dots,k_r}(n)$ we use a finite version of the so-called stuffle identity for multiple zeta functions.
Subsequently, we will utilise a variant of (finite) multiple zeta functions, called the multiple zeta star functions
or non-strict multiple zeta functions $\zeta_{N}^{\ast}(k_1,\dots,k_{r})$, which recently attracted some interest,\cite{Ohno,Ohno2,Ohno3,Ohno4,Hoffman,Mune,Eie,Zudi}
where the summation indices satisfy $N\ge n_1\ge n_2\ge \dots \ge n_{r} \ge 1$ in contrast to $N\ge n_1> n_2> \dots > n_{r}> 1$, as in the usual definition~\eqref{zeta},
\begin{equation*}
\zeta_{N}^{\ast}(k_1,\dots,k_{r})=\sum_{ N\ge n_1 \ge n_2\ge \dots \ge
    n_{r} \ge 1}\frac{1}{n_1^{k_1} n_2^{k_2} \dots n_r^{k_{r}}},
\end{equation*}
with $N,k_1,\dots,k_r\in\NN$. The star form can be converted into ordinary finite multiple zeta functions by considering all possible deletions of commas, i.e. 
\begin{equation}
\label{OHNOconvert}
\zeta_{N}^{\ast}(k_1,\dots,k_{r})=
\sum_{h=1}^{r}\sum_{1\le \ell_1<\ell_2<\dots<\ell_{h-1}<r}\zeta_N\biggl(\sum_{i_1=1}^{\ell_1}k_{i_1},\sum_{i_2=\ell_1+1}^{\ell_2}k_{i_2},\dots,\sum_{i_h=\ell_{h-1}+1}^{r}k_{i_h}\biggr);
\end{equation}
note that the first term $h=1$ should be interpreted as $\zeta_N(\sum_{i_1=\ell_{0}+1}^{r}k_{i_1})$, subject to $\ell_0=0$. The notation $\zeta_{N}^{\ast}(k_1,\dots,k_r)$ is chosen in analogy with Aoki and Ohno~\cite{Ohno}
where infinite counterparts of $\zeta_{N}^{\ast}(k_1,\dots,k_r)$ have been treated. 
First we will study the instructive case of $\xi_k(n)$, reproving the result of Ohno. Then we will state our main result
concerning the evaluation of $\xi_{k_1,\dots,k_r}(n)$ into multiple zeta functions.

\section{A simple evaluation of $\xi_k(n)$}
Ohno~\cite{Ohno2} evaluated the sum $\xi_k(n)$ for $k,n\in\NN$ applying his generalization of the duality and sum formulas for
multiple zeta functions to a result of Arakawa and Kaneko~\cite{Arakawa}. In the following we will give an alternative simple and self-contained derivation of his result, stated in Theorem~\ref{OHNOthe1}.
In order to evaluate $\xi_k(n)$ for $k,n\in\NN$ we only use the two basic facts stated below.
\begin{equation}
\begin{split}
\label{OHNObasic}
&\frac{1}{\Gamma(n)}\int_{0}^{\infty}t^{k-1}e^{-tn}dt=\frac{1}{n^k},\quad\text{for}\quad n\in\NN,\\ &\sum_{\ell=1}^{n}\binom{n}{\ell}\frac{(-1)^{\ell-1}}{\ell^r}=\zeta_{n}^{\ast}(\underbrace{1,\dots,1}_{r})=\zeta_{n}^{\ast}(\{1\}_r).
\end{split}
\end{equation}
The second identity can be immediately deduced by repeated usage of the formula $\binom{n}{k}=\sum_{\ell=k}^{n}\binom{\ell-1}{k-1}$. We proceed as follows.
\begin{equation*}
\xi_{k}(n)=\frac{1}{\Gamma(n)}\int_{0}^{\infty}\frac{t^{n-1}}{e^t-1}\Li_{k}(1-e^{-t})dt
=\frac{1}{\Gamma(n)}\int_{0}^{\infty}t^{n-1} e^{-t}\sum_{m \ge 1}\frac{(1-e^{-t})^{m-1}}{m^k}dt.
\end{equation*}
We expand $(1-e^{-t})^{m-1}$ by the binomial theorem and interchange summation and integration.
According to~\eqref{OHNObasic} we obtain
\begin{equation*}
\begin{split}
\xi_{k}(n)&=\sum_{m \ge 1}\frac{1}{m^k}\sum_{\ell=0}^{m-1}\binom{m-1}{\ell}\frac{(-1)^{\ell}}{\Gamma(n)}\int_{0}^{\infty}t^{n-1} e^{-(\ell+1)t}dt\\
&=\sum_{m \ge 1}\frac{1}{m^k}\sum_{\ell=0}^{m-1}\binom{m-1}{\ell}\frac{(-1)^{\ell}}{(\ell+1)^n}.
\end{split}
\end{equation*}
Since $\binom{m-1}{\ell}=\binom{m}{\ell+1}\frac{\ell+1}{m}$, we get according to~\eqref{OHNObasic} after an index shift the following result
\begin{theorem}[Ohno~\cite{Ohno2}]
\label{OHNOthe1}
For $k,n\in\NN$ the function $\xi_k(n)$ is given by
\begin{equation*}
\xi_k(n)=\sum_{m_1\ge m_2\ge \dots \ge m_{n}\ge 1}\frac{1}{m_1^{k+1}m_2\dots m_n}=
\zeta^{\ast}(k+1,\{1\}_{n-1}).
\end{equation*}
\end{theorem}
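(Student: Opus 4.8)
The plan is to read off the identity directly from the series representation already derived above, relying only on the two elementary facts collected in~\eqref{OHNObasic}. At the present stage we have
\begin{equation*}
\xi_k(n)=\sum_{m\ge 1}\frac{1}{m^k}\sum_{\ell=0}^{m-1}\binom{m-1}{\ell}\frac{(-1)^{\ell}}{(\ell+1)^n},
\end{equation*}
so the whole matter reduces to recognising the inner alternating binomial sum as a finite multiple zeta star value and then nesting it correctly under the outer summation.

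First I would rewrite $\binom{m-1}{\ell}=\binom{m}{\ell+1}\frac{\ell+1}{m}$ and shift the summation index to $j=\ell+1$. The factor $\frac{\ell+1}{m}$ cancels one power of $(\ell+1)$ against the denominator and contributes a further factor $1/m$, so that the weight of the outer variable is promoted from $m^{-k}$ to $m^{-(k+1)}$, while the inner sum becomes
\begin{equation*}
\sum_{j=1}^{m}\binom{m}{j}\frac{(-1)^{j-1}}{j^{n-1}}.
\end{equation*}
This is exactly the left-hand side of the second identity in~\eqref{OHNObasic} with $r=n-1$, whence it equals $\zeta_m^{\ast}(\{1\}_{n-1})$.

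Next I would substitute this evaluation back and set $m=m_1$, obtaining $\xi_k(n)=\sum_{m_1\ge 1}\frac{1}{m_1^{k+1}}\zeta_{m_1}^{\ast}(\{1\}_{n-1})$. Unfolding the definition of the finite star function, the factor $\zeta_{m_1}^{\ast}(\{1\}_{n-1})$ is the sum of $\frac{1}{m_2\cdots m_n}$ over all chains $m_1\ge m_2\ge\dots\ge m_n\ge 1$; appending the outer weight $m_1^{-(k+1)}$ merges the two summations into the single monotone sum $\sum_{m_1\ge m_2\ge\dots\ge m_n\ge 1}\frac{1}{m_1^{k+1}m_2\cdots m_n}$, which by definition is $\zeta^{\ast}(k+1,\{1\}_{n-1})$.

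I expect the only genuinely delicate point to be the clean identification of the inner sum with a finite star value and the verification that the nesting of the two monotone summations is legitimate; the analytic issues (absolute convergence and the interchange of sum and integral) have already been settled in passing to the series form above, and the second fact in~\eqref{OHNObasic} supplies the sole combinatorial input, so once the index shift is carried out the conclusion follows by bookkeeping alone.
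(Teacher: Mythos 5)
Your proposal is correct and takes essentially the same route as the paper: the identical rewriting $\binom{m-1}{\ell}=\binom{m}{\ell+1}\frac{\ell+1}{m}$, the same index shift $j=\ell+1$ promoting $m^{-k}$ to $m^{-(k+1)}$, and the same appeal to the second identity in~\eqref{OHNObasic} to recognise the inner alternating sum as $\zeta_m^{\ast}(\{1\}_{n-1})$. You simply spell out the bookkeeping (including the nesting of the two monotone summations) that the paper compresses into the phrase ``after an index shift.''
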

Note that one can convert the multiple star zeta function above into ordinary multiple zeta functions according to~\eqref{OHNOconvert} (with respect to the corresponding relation for infinite series), or can directly simplify the multiple zeta star function using (cycle) sum formulas, see i.e.~Ohno and Wakabayashi~\cite{Ohno3} or Ohno and Okuda~\cite{Ohno4}.

\section{General case}
In the general case of $\xi_{k_1,\dots,k_r}(n)$ we will prove the following result.
\begin{theorem}
\label{OHNOthe2}
For $k_1,\dots,k_r,n\in\NN$ the function $\xi_{k_1,\dots,k_r}(n)$ is given by
\begin{equation*}
\xi_{k_1,\dots,k_r}(n)=\sum_{n_1\ge 1}\frac{\zeta_{n_1}^{\ast}(\{1\}_{n-1})\zeta_{n_1-1}(k_2,\dots,k_r)}{n_1^{k_1+1}}.
\end{equation*}
Furthermore, $\xi_{k_1,\dots,k_r}(n)$ can be evaluated into sums of multiple zeta functions.
\end{theorem}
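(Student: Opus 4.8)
The plan is to follow the self-contained computation of Section~2 almost verbatim, replacing the single polylogarithm by its multiple analogue, and then to attack the second assertion with a finite stuffle identity. First I would write $\frac{1}{e^t-1}=\frac{e^{-t}}{1-e^{-t}}$ and insert the defining series of $\Li_{k_1,\dots,k_r}(1-e^{-t})$; the factor $(1-e^{-t})^{-1}$ cancels one power of $(1-e^{-t})^{n_1}$, so that
\[
\frac{1}{e^t-1}\Li_{k_1,\dots,k_r}(1-e^{-t})=e^{-t}\sum_{n_1>\dots>n_r\ge 1}\frac{(1-e^{-t})^{n_1-1}}{n_1^{k_1}\dots n_r^{k_r}}.
\]
Expanding $(1-e^{-t})^{n_1-1}$ by the binomial theorem, interchanging summation and integration (licit by absolute convergence for $\Re(s)>0$), and invoking the first identity of~\eqref{OHNObasic} turns each term $\binom{n_1-1}{\ell}(-1)^\ell e^{-(\ell+1)t}$ into $\binom{n_1-1}{\ell}(-1)^\ell(\ell+1)^{-n}$, exactly as in the case $r=1$.

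Next I would use $\binom{n_1-1}{\ell}=\binom{n_1}{\ell+1}\frac{\ell+1}{n_1}$ together with the shift $j=\ell+1$ to rewrite the inner $\ell$-sum as $\frac1{n_1}\sum_{j=1}^{n_1}\binom{n_1}{j}\frac{(-1)^{j-1}}{j^{n-1}}$, which the second identity of~\eqref{OHNObasic} identifies with $\frac{1}{n_1}\zeta_{n_1}^\ast(\{1\}_{n-1})$. Factoring off the summation over the distinguished top index $n_1$ and observing that the remaining sum over $n_1>n_2>\dots>n_r\ge 1$ with weights $k_2,\dots,k_r$ is precisely $\zeta_{n_1-1}(k_2,\dots,k_r)$ yields the claimed closed form. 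As a check, the case $n=1$ collapses to $\zeta(k_1+1,k_2,\dots,k_r)$ and the case $r=1$ recovers Theorem~\ref{OHNOthe1}.

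For the second assertion I would first convert the star factor by~\eqref{OHNOconvert}, which for all arguments equal to $1$ reads $\zeta_{n_1}^\ast(\{1\}_{n-1})=\sum_{\lambda}\zeta_{n_1}(\lambda)$, the sum ranging over all compositions $\lambda$ of $n-1$. Since this factor is capped at $n_1$ while $\zeta_{n_1-1}(k_2,\dots,k_r)$ is capped at $n_1-1$, I would reconcile the two bounds by the single splitting $\zeta_{n_1}(\lambda_1,\dots,\lambda_h)=\zeta_{n_1-1}(\lambda_1,\dots,\lambda_h)+n_1^{-\lambda_1}\zeta_{n_1-1}(\lambda_2,\dots,\lambda_h)$, which separates the event that the largest star-index equals $n_1$ and transfers the surplus power onto the head $n_1^{-(k_1+1)}$. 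Both remaining finite zeta functions now share the common bound $n_1-1$, so the finite stuffle identity expresses each product $\zeta_{n_1-1}(\mu)\,\zeta_{n_1-1}(k_2,\dots,k_r)=\sum_{\nu\in\mu\,\stuffi\,(k_2,\dots,k_r)}\zeta_{n_1-1}(\nu)$ as a $\QQ$-linear combination of finite multiple zeta functions bounded by $n_1-1$. Finally, attaching the head $n_1^{-K}$ (with $K\ge k_1+1\ge 2$ throughout) and summing over $n_1\ge 1$ sends each finite piece to a convergent multiple zeta value, $\sum_{n_1\ge 1}n_1^{-K}\zeta_{n_1-1}(\nu)=\zeta(K,\nu)$.

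The main obstacle is the bookkeeping in this last step: one must carry the star-to-strict conversion, the bound-matching split, and the stuffle interleavings simultaneously, while guaranteeing that the leading exponent stays $\ge 2$ so that every resulting multiple zeta value is admissible and the interchange of the $n_1$-summation with the finite stuffle is justified. The analytic input (absolute convergence and Fubini) is routine; the genuine work is purely the combinatorics of assembling the weakly decreasing chain of $1$'s and the strictly decreasing chain of weights $k_2,\dots,k_r$ into admissible multiple zeta values.
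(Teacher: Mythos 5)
Your proposal is correct and follows essentially the same route as the paper's own proof: the identical binomial expansion, Gamma-integral evaluation, and index shift via $\binom{n_1-1}{\ell}=\binom{n_1}{\ell+1}\frac{\ell+1}{n_1}$ produce the factor $\zeta_{n_1}^{\ast}(\{1\}_{n-1})$ for the first claim, and for the second claim the paper likewise converts the star factor by~\eqref{OHNOconvert}, applies the same bound-matching split $\zeta_{n_1}(a_1,\dots)=\zeta_{n_1-1}(a_1,\dots)+n_1^{-a_1}\zeta_{n_1-1}(a_2,\dots)$, invokes the finite stuffle identity of Lemma~\ref{OHNOstufflelem}, and sums over $n_1$ to obtain the admissible multiple zeta values recorded in Corollary~\ref{OHNOthe3}. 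Your bookkeeping concerns (leading exponent $k_1+1\ge 2$, interchange of the $n_1$-sum with the finite stuffle) are handled implicitly in the paper in exactly the way you describe.
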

The explicit evaluation of $\xi_{k_1,\dots,k_r}(n)$ will be given in Corollary~\ref{OHNOthe3}. We have
\begin{equation*}
\begin{split}
\xi_{k_1,\dots,k_r}(n)&=\frac{1}{\Gamma(n)}\int_{0}^{\infty}\frac{t^{n-1}}{e^t-1}\Li_{k_1,\dots,k_r}(1-e^{-t})dt\\
&=\frac{1}{\Gamma(n)}\int_{0}^{\infty}t^{n-1}e^{-t}
\sum_{n_1>n_2>\dots>n_r\ge 1}\frac{(1-e^{-t})^{n_1-1}}{n_1^{k_1}n_2^{k_2}\dots n_{r}^{k_r}}dt.
\end{split}
\end{equation*}
Proceeding as before we expand $(1-e^{-t})^{n_1-1}$ by the binomial theorem and interchange summation and integration. We get
\begin{equation*}
\begin{split}
\xi_{k_1,\dots,k_r}(n)&=
\sum_{n_1>n_2>\dots>n_r\ge 1}\frac{1}{n_1^{k_1}n_2^{k_2}\dots n_{r}^{k_r}}
\sum_{\ell=0}^{n_1-1}\binom{n_1-1}{\ell}\frac{(-1)^{\ell}}{\Gamma(n)}\int_{0}^{\infty}t^{n-1}e^{-(\ell+1)t}dt.
\end{split}
\end{equation*}
According to~\eqref{OHNObasic} we obtain
\begin{equation*}
\xi_{k_1,\dots,k_r}(n)=\sum_{n_1>n_2>\dots>n_r\ge 1}\frac{\zeta_{n_1}^{\ast}(\{1\}_{n-1})}{n_1^{k_1+1}n_2^{k_2}\dots n_{r}^{k_r}}
=\sum_{n_1\ge 1}\frac{\zeta_{n_1}^{\ast}(\{1\}_{n-1})\zeta_{n_1-1}(k_2,\dots,k_r)}{n_1^{k_1+1}}.
\end{equation*}
This proves the first part of Theorem~\ref{OHNOthe2}.

\smallskip

Concerning the second part we proceed as follows. We will evaluate the
product $S$ of finite multiple zeta (star) functions
\begin{equation*}
S=S_{n_1}(n,k_2,\dots,k_2)=\zeta_{n_1}^{\ast}(\{1\}_{n-1})\zeta_{n_1-1}(k_2,\dots,k_r)
\end{equation*}
into sums of finite multiple zeta functions $\zeta_{n_1-1}(\mathbf{f})$, for some $\mathbf{f}=(f_1,\dots,f_j)$, with $f_i\in\NN$, $1\le i\le j$, which will prove the second part of the stated result. By~\eqref{OHNOconvert} we can write $\zeta_{n_1}^{\ast}(\{1\}_{n-1})$ in terms of ordinary finite multiple zeta functions
\begin{equation*}
\zeta_{n_1}^{\ast}(\{1\}_{n-1})=
\sum_{h=1}^{n-1}\sum_{1\le \ell_1<\ell_2<\dots<\ell_{h-1}<n-1}\zeta_{n_1}\bigl(\ell_1,\ell_2-\ell_1,\dots,n-\ell_{h-1}-1\bigr);
\end{equation*}
for example $\zeta_{n_1}^{\ast}(\{1\}_3)=\zeta_{n_1}(3)+\zeta_{n_1}(1,2)+\zeta_{n_1}(2,1)+\zeta_{n_1}(1,1,1)$.
We can convert finite zeta functions $\zeta_{N}(a_1,\dots,a_r)$ into finite zeta functions $\zeta_{N-1}(b_1,\dots,b_s)$ by
\begin{equation*}
\zeta_{N}(a_1,\dots,a_r)=\zeta_{N-1}(a_1,\dots,a_r) + \frac{1}{N^{a_1}}\zeta_{N-1}(a_2,\dots,a_r).
\end{equation*}
Consequently, we can express the product $S=\zeta_{n_1}^{\ast}(\{1\}_{n-1})\zeta_{n_1-1}(k_2,\dots,k_r)$ of finite multiple zeta (star) functions in the following way.
\begin{equation}
\begin{split}
\label{OHNOs}
S&=
\sum_{h=1}^{n-1}\sum_{1\le \ell_1<\ell_2<\dots<\ell_{h-1}<n-1}
\zeta_{n_1-1}\bigl(\ell_1,\ell_2-\ell_1,\dots,n-\ell_{h-1}-1\bigr)\zeta_{n_1-1}(k_2,\dots,k_r)\\
&\quad+ \sum_{h=1}^{n-1}\sum_{1\le \ell_1<\ell_2<\dots<\ell_{h-1}<n-1}\frac{\zeta_{n_1-1}\bigl(\ell_2,\dots,n-\ell_{h-1}-1\bigr)\zeta_{n_1-1}(k_2,\dots,k_r)}{n_1^{\ell_1}}\Big).
\end{split}
\end{equation}
Now we use finite versions of so-called \emph{stuffle identities}, see i.e.~Borwein et al.~\cite{Bor}. Stuffle identities provide evaluation of products of multiple zeta functions $\zeta(\mathbf{k})\zeta(\mathbf{h})$ into into sums of multiple zeta functions $\zeta(\mathbf{k})\zeta(\mathbf{h})=\sum_{\mathbf{f}\in\stuffi(\mathbf{k},\mathbf{h})}\zeta(\mathbf{f})$; here $\zeta(\mathbf{k})=\zeta(k_1,\dots,k_r)$ and $\zeta(\mathbf{h})=\zeta(h_1,\dots,h_s)$. For our problem we need finite versions of the stuffle identities, providing evaluations of products of finite multiple zeta functions $\zeta_N(\mathbf{k})\zeta_N(\mathbf{h})$ into into sums of finite multiple zeta functions $\zeta_N(\mathbf{k})\zeta_N(\mathbf{h})=\sum_{\mathbf{f}\in\stuffi(\mathbf{k},\mathbf{h})}\zeta_N(\mathbf{f})$.
This would lead then to an evaluation of $\xi_{k_1,\dots,k_r}(n)$ into single finite multiple zeta functions. 

\smallskip

Following~\cite{Bor} we define for two given strings $\mathbf{k}=(k_1,\dots,k_r)$ and $\mathbf{h}=(h_1,\dots,h_s)$
the set $\stuffi(\mathbf{k},\mathbf{h})$ as the smallest set of strings over the alphabet $\mathcal{A}$, defined by  $$\mathcal{A}=\{k_1,\dots,k_r,h_1,\dots,h_s,``+'',``,'',``('',``)''\}$$
satisfying $(\mathbf{k},\mathbf{h})=(k_1,\dots,k_r,h_1,\dots,h_s)\in\stuffi(\mathbf{k},\mathbf{h})$
and further if a string of the form $(U,k_n,h_m,V)\in\stuffi(\mathbf{k},\mathbf{h})$, then so are the strings $(U,h_m,k_n,V)\in\stuffi(\mathbf{k},\mathbf{h})$ and $(U,k_n+h_m,V)\in\stuffi(\mathbf{k},\mathbf{h})$. Stuffle identities arise from the definition of (finite) multiple zeta functions in terms of sums; the term \emph{stuffle} derives from the manner in which the two upper strings are
combined. Other closely related identities are due to different representations of multiple zeta functions (see for example~\cite{Bor}).
We will use the following result
\begin{lemma}[Stuffle identity; finite version]
\label{OHNOstufflelem}
Let $\zeta_N(\mathbf{k})=\zeta_N(k_1,\dots,k_r)$ and $\zeta_N(\mathbf{h})=\zeta_N(h_1,\dots,h_s)$, with $N,r,s\in\NN$ and $k_i,h_j\in\NN$, $1\le i\le r$, $1\le j\le s$. Then,
\begin{equation}
\label{OHNOstuffle}
\zeta_{N}(\mathbf{k})\zeta_N(\mathbf{h})=\sum_{\mathbf{f}\in\stuffi(\mathbf{k},\mathbf{h})}\zeta_N(\mathbf{f}).
\end{equation}
\end{lemma}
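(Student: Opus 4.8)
The plan is to prove \eqref{OHNOstuffle} by induction on the combined length $r+s$ of the two index strings, the central mechanism being a decomposition of the double sum defining the product according to the three possible orderings of the two largest summation indices. It is convenient to allow empty strings of length zero, with the conventions $\zeta_N() = 1$ and $\stuffi(\emptyset,\mathbf{h}) = \{\mathbf{h}\}$, $\stuffi(\mathbf{k},\emptyset) = \{\mathbf{k}\}$; the base case $r = s = 0$ then reads $1 = 1$, and more generally whenever $r = 0$ or $s = 0$ one factor equals $1$ and \eqref{OHNOstuffle} is immediate.

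For the inductive step write $\mathbf{k} = (k_1,\dots,k_r)$, $\mathbf{h} = (h_1,\dots,h_s)$ with $r,s\ge 1$, and set $\mathbf{k}' = (k_2,\dots,k_r)$, $\mathbf{h}' = (h_2,\dots,h_s)$. Expanding the product gives
\begin{equation*}
\zeta_N(\mathbf{k})\zeta_N(\mathbf{h}) = \sum_{\substack{N\ge n_1 > \dots > n_r\ge 1\\ N\ge m_1 > \dots > m_s\ge 1}}\frac{1}{n_1^{k_1}\cdots n_r^{k_r}\, m_1^{h_1}\cdots m_s^{h_s}},
\end{equation*}
a sum over pairs of strictly decreasing chains whose maximal elements are $n_1$ and $m_1$. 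I would partition the summation range into the three mutually exclusive cases $n_1 > m_1$, $n_1 < m_1$ and $n_1 = m_1$. In the first case $n_1$ is the unique largest index, all remaining indices lie in $\{1,\dots,n_1-1\}$, and summing over $n_1$ from $1$ to $N$ with the weight $n_1^{-k_1}$ yields $\sum_{n_1=1}^N n_1^{-k_1}\,\zeta_{n_1-1}(\mathbf{k}')\zeta_{n_1-1}(\mathbf{h})$. The second case is symmetric, producing $\sum_{m_1=1}^N m_1^{-h_1}\,\zeta_{m_1-1}(\mathbf{k})\zeta_{m_1-1}(\mathbf{h}')$, while in the third case the common largest index carries the weight $n_1^{-(k_1+h_1)}$ and we obtain $\sum_{n_1=1}^N n_1^{-(k_1+h_1)}\,\zeta_{n_1-1}(\mathbf{k}')\zeta_{n_1-1}(\mathbf{h}')$.

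Each inner product in these three expressions has combined length strictly smaller than $r+s$, so the induction hypothesis applies and replaces it by the corresponding sum over $\stuffi(\mathbf{k}',\mathbf{h})$, $\stuffi(\mathbf{k},\mathbf{h}')$, respectively $\stuffi(\mathbf{k}',\mathbf{h}')$. Using the elementary identity $\sum_{n_1=1}^N n_1^{-a}\,\zeta_{n_1-1}(\mathbf{g}) = \zeta_N(a,\mathbf{g})$, the three contributions become
\begin{equation*}
\sum_{\mathbf{g}\in\stuffi(\mathbf{k}',\mathbf{h})}\zeta_N(k_1,\mathbf{g}) + \sum_{\mathbf{g}\in\stuffi(\mathbf{k},\mathbf{h}')}\zeta_N(h_1,\mathbf{g}) + \sum_{\mathbf{g}\in\stuffi(\mathbf{k}',\mathbf{h}')}\zeta_N(k_1+h_1,\mathbf{g}).
\end{equation*}

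It remains to recognise this as $\sum_{\mathbf{f}\in\stuffi(\mathbf{k},\mathbf{h})}\zeta_N(\mathbf{f})$, and this matching is the step I expect to require the most care. Concretely, I would verify that the generating definition of $\stuffi(\mathbf{k},\mathbf{h})$ decomposes disjointly into the three families $k_1\stuffi(\mathbf{k}',\mathbf{h})$, $h_1\stuffi(\mathbf{k},\mathbf{h}')$ and $(k_1+h_1)\stuffi(\mathbf{k}',\mathbf{h}')$ of words prefixed by $k_1$, $h_1$ and $k_1+h_1$. The point is that the swap and merge operations never exchange two letters originating from the same string, so the relative order of $k_1,\dots,k_r$ and of $h_1,\dots,h_s$ is preserved in every stuffle word; consequently the leading entry of any such word is forced to come from $k_1$, from $h_1$, or from the merge $k_1+h_1$, and deleting it returns a stuffle word of the appropriate pair of tails. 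This first-letter recursion is exactly the content of the three-way split above, which closes the induction.
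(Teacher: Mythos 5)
Your proof is correct, and it reaches the identity by a genuinely different decomposition than the paper's. Both arguments are inductions on the combined length $r+s$, but the paper expands only the $\mathbf{h}$-factor as $\sum_{m_1=1}^{N} m_1^{-h_1}\zeta_{m_1-1}(h_2,\dots,h_s)$ and then splits the range of $m_1$ against the \emph{entire} chain $N\ge n_1>\dots>n_r\ge 1$ at once: the singleton pieces $m_1=n_\ell$ produce the merged (``stuffed'') slots, while the open intervals between consecutive $n_\ell$'s are handled by interchanging summations, after which the induction hypothesis is applied to the resulting shorter products $\zeta_{m_1-1}(k_{\ell+1},\dots,k_r)\zeta_{m_1-1}(h_2,\dots,h_s)$. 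You instead split only at the top, comparing the two maximal indices ($n_1>m_1$, $n_1<m_1$, $n_1=m_1$), which yields exactly three terms and makes the correspondence with $\stuffi(\mathbf{k},\mathbf{h})$ a clean first-letter recursion: every stuffle word begins with $k_1$, $h_1$, or $k_1+h_1$, with tail in $\stuffi(\mathbf{k}',\mathbf{h})$, $\stuffi(\mathbf{k},\mathbf{h}')$, or $\stuffi(\mathbf{k}',\mathbf{h}')$ respectively. Your route buys tighter bookkeeping: the three-way split matches the recursive structure of the stuffle set letter by letter, and you explicitly isolate and justify the one combinatorial fact the paper leaves implicit, namely that the closure definition of $\stuffi$ admits this disjoint first-letter decomposition because the swap and merge moves never reorder letters originating from the same string (disjointness holds at the level of formal words over the alphabet, so numerical coincidences such as $k_1=h_1$ cause no double counting). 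The paper's coarser split more directly visualizes the stuffing of $m_1$ into the slots of the $\mathbf{k}$-chain, but generates $2r+1$ pieces per step and leaves the matching with the formal definition of $\stuffi(\mathbf{k},\mathbf{h})$ entirely to the reader; given that the paper itself offers only a sketch, your version is, if anything, the more complete argument.
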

\begin{remark}
Note that the conditions on the parameters $k_i,h_j$ can be relaxed in various ways, i.e.~$k_i,h_j\in\RR^{+}$.
Typical examples of (finite) stuffle identities read as follows, 
\begin{equation*}
\begin{split}
\zeta_{N}(r)\zeta_N(t)&=\zeta_N(r,t)+\zeta_N(t+r)+\zeta_N(t,r),\\
\zeta_{N}(r,s)\zeta_N(t)&= \zeta_{N}(r,s,t)+\zeta_{N}(r,s+t)+\zeta_{N}(r,t,s)+\zeta_{N}(r+t,s)+\zeta_{N}(t,r,s);\\
\end{split}
\end{equation*}
we refer to~\cite{Bor} concerning infinite counterparts, and also to Borwein and Girgensohn~\cite{Bor2} where the second identity is implicitly derived.
Stuffle identities for finite multiple zeta functions seem to be natural, since one does not have to exclude the cases $k_1=1$ or $h_1=1$ in contrast
to infinite multiple zeta functions. 
\end{remark}
\begin{proof}[Proof of Lemma~\ref{OHNOstufflelem} (Sketch)]
In order prove this result in an elementary way, one can use
induction with respect to the total length
$|\mathbf{k}|+|\mathbf{h}|$. We do not want to give a full proof,
since we believe that~\eqref{OHNOstuffle} is already known (although we did not find a suitable reference in the literature for the finite version of the stuffle identity); hence, we
only sketch the simple arguments. By definition
\begin{equation*}
 \zeta_N(\mathbf{k}) = \sum_{  N\ge n_1 > n_2 > \dots >  n_{r}\ge 1}\frac{1}{n_1^{k_1} n_2^{k_2} \dots n_{r}^{k_{r}}}
 =\sum_{n_1=1}^{n}\frac{1}{n_1^{k_1}}\sum_{n_2=1}^{n_1-1}\frac{1}{n_2^{k_2}}\dots \sum_{n_r=1}^{n_{r-1}-1}\frac{1}{n_r^{k_r}}.
\end{equation*}
Consequently,
\begin{equation*}
\zeta_{N}(\mathbf{k})\zeta_N(\mathbf{h})=
\sum_{n_1=1}^{n}\frac{1}{n_1^{k_1}}\sum_{n_2=1}^{n_1-1}\frac{1}{n_2^{k_2}}\dots \sum_{n_r=1}^{n_{r-1}-1}\frac{1}{n_r^{k_r}}\sum_{m_1=1}^{N}\frac{1}{m_1^{h_1}}\zeta_{m_1-1}(h_2,\dots,h_s).
\end{equation*}
Since by definition of $\zeta_{N}(\mathbf{k})$ the summation ranges are given by $N\ge n_1 > n_2 > \dots >  n_{r}\ge 1$, we can simple split the summation range of $m_1$ into the following parts
\begin{equation*}
\sum_{m_1=1}^{N}=\sum_{m_1=1}^{n_{r}-1}+\sum_{m_1=n_{r}}+
\sum_{\ell=1}^{r-1}\biggl(\sum_{m_1=n_{r+1-\ell}+1}^{n_{r-\ell}-1}+\sum_{m_1=n_{r-\ell}}\biggr)+
\sum_{m_1=n_1+1}^{N}.
\end{equation*}
The terms corresponding to single term sums of the form $m_1=n_{\ell}$, $1\le \ell \le r$ are merged into the corresponding sums in $\zeta_{N}(\mathbf{k})$; then we recursively apply the same procedure to products $$\zeta_{m_1-1}(k_{\ell+1},\dots,k_r)\zeta_{m_1-1}(h_2,\dots,h_s),$$
which are of smaller length $r-\ell+s-1<r+s$. Concerning the remaining sums we simply interchange summations with the corresponding sums in $\zeta_{N}(\mathbf{k})\zeta_N(\mathbf{h})$ and repeat the same procedure to the arising products of finite multiple zeta functions, which are also of smaller length. This proves the stated result.
\end{proof}
\begin{remark}
Evidently, as remarked in~\cite{Bor}, the relative order of the two strings is preserved, but elements
of the two strings may also be shoved together into a common slot (stuffing), thereby
reducing the depth. 
\end{remark}

Subsequently we use the notation $\mathbf{\ell}_{n}^{[1]}=(\ell_1,\ell_2-\ell_1,\dots,n-\ell_{h-1}-1)$,
$\mathbf{\ell}_{n}^{[2]}=(\ell_2,,\ell3-\ell_2,\dots,n-\ell_{h-1}-1)$, and $\mathbf{k}=(k_2,\dots,k_r)$.
For the simplification of $S=\zeta_{n_1}^{\ast}(\{1\}_{n-1})\zeta_{n_1-1}(k_2,\dots,k_r)$, as given in~\eqref{OHNOs},
we apply the stuffle identity of Lemma~\ref{OHNOstufflelem} to values $\zeta_{n_1-1}(\mathbf{\ell}_{n}^{[1]})\zeta_{n_1-1}(\mathbf{k})$ and $\zeta_{n_1-1}(\mathbf{\ell}_{n}^{[2]})\zeta_{n_1-1}(\mathbf{k})$. Hence, $\xi_{k_1,\dots,k_r}(n)$ can be completely evaluated into finite sums of multiple zeta functions, which proves the second part of Theorem~\ref{OHNOthe2}.
We state the explicit evaluation of $\xi_{k_1,\dots,k_r}(n)$ in the following corollary.
\begin{coroll}
\label{OHNOthe3}
For $k_1,\dots,k_r,n\in\NN$ the function $\xi_{k_1,\dots,k_r}(n)$ is given by
\begin{equation*}
\begin{split}
\xi_{k_1,\dots,k_r}(n)&=\sum_{h=1}^{n-1}\sum_{1\le \ell_1<\ell_2<\dots<\ell_{h-1}<n-1}
\sum_{\mathbf{f}\in\stuffi(\mathbf{\ell}_{n}^{[1]},\mathbf{k})}\zeta(k_1+1,\mathbf{f})\\
&\quad+ \sum_{h=1}^{n-1}\sum_{1\le \ell_1<\ell_2<\dots<\ell_{h-1}<n-1}
\sum_{\mathbf{f}\in\stuffi(\mathbf{\ell}_{n}^{[2]},\mathbf{k})}\zeta(k_1+1+\ell_1,\mathbf{f}),
\end{split}
\end{equation*}
with respect to the notation $\mathbf{\ell}_{n}^{[1]}=(\ell_1,\ell_2-\ell_1,\dots,n-\ell_{h-1}-1)$,
$\mathbf{\ell}_{n}^{[2]}=(\ell_2,\ell_3-\ell_2,\dots,n-\ell_{h-1}-1)$, and $\mathbf{k}=(k_2,\dots,k_r)$.
\end{coroll}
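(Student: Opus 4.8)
The plan is to assemble the statement from three ingredients already established: the first part of Theorem~\ref{OHNOthe2}, the expansion~\eqref{OHNOs} of the product $S=\zeta_{n_1}^{\ast}(\{1\}_{n-1})\zeta_{n_1-1}(\mathbf{k})$, and the finite stuffle identity of Lemma~\ref{OHNOstufflelem}; the remaining work is then to carry out the outer summation over $n_1$. First I would start from $\xi_{k_1,\dots,k_r}(n)=\sum_{n_1\ge 1}n_1^{-(k_1+1)}S$ and substitute~\eqref{OHNOs}, which presents $S$ as two families of products, namely $\zeta_{n_1-1}(\mathbf{\ell}_{n}^{[1]})\zeta_{n_1-1}(\mathbf{k})$ and $n_1^{-\ell_1}\zeta_{n_1-1}(\mathbf{\ell}_{n}^{[2]})\zeta_{n_1-1}(\mathbf{k})$, summed over $h$ and over $1\le\ell_1<\dots<\ell_{h-1}<n-1$.

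Next I would apply Lemma~\ref{OHNOstufflelem} with $N=n_1-1$ to each product, turning $\zeta_{n_1-1}(\mathbf{\ell}_{n}^{[1]})\zeta_{n_1-1}(\mathbf{k})$ into $\sum_{\mathbf{f}\in\stuffi(\mathbf{\ell}_{n}^{[1]},\mathbf{k})}\zeta_{n_1-1}(\mathbf{f})$, and likewise for the second family with $\mathbf{\ell}_{n}^{[2]}$ in place of $\mathbf{\ell}_{n}^{[1]}$. The decisive step is then the summation over $n_1$, which rests on the elementary index-absorption identity
\[
\sum_{n_1\ge 1}\frac{\zeta_{n_1-1}(\mathbf{f})}{n_1^{a}}=\zeta(a,\mathbf{f}),\qquad a\ge 2,
\]
valid for every composition $\mathbf{f}$ and immediate from the definition of the multiple zeta function: since $\zeta_{n_1-1}(\mathbf{f})$ already ranges over all indices strictly below $n_1$, prepending $n_1$ with exponent $a$ reconstitutes the strict inequality chain of $\zeta(a,\mathbf{f})$. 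Applying it with $a=k_1+1$ to the first family gives the terms $\zeta(k_1+1,\mathbf{f})$, while in the second family the extra factor $n_1^{-\ell_1}$ merges with $n_1^{-(k_1+1)}$ to produce $a=k_1+1+\ell_1$ and hence $\zeta(k_1+1+\ell_1,\mathbf{f})$; collecting the two families yields precisely the asserted formula.

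I do not expect a genuine obstacle, since the corollary is essentially a bookkeeping assembly of results already in hand. The only points demanding care are the verification of the index-absorption identity (including the degenerate case $r=1$, where $\mathbf{k}$ is empty, $\stuffi(\mathbf{\ell}_{n}^{[1]},\mathbf{k})=\{\mathbf{\ell}_{n}^{[1]}\}$, and $\zeta_{n_1-1}$ of an empty string is read as $1$), and the convergence bookkeeping: because $k_1\ge 1$ one has $a=k_1+1\ge 2$ in the first family and $a=k_1+1+\ell_1\ge 3$ in the second, so every resulting multiple zeta function converges and the interchange of the finite sums over $h$, $\ell_1,\dots,\ell_{h-1}$ and $\mathbf{f}$ with the series over $n_1$ is legitimate by absolute convergence of the nonnegative terms.
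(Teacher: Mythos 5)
Your proposal is correct and takes essentially the same route as the paper: you substitute the expansion \eqref{OHNOs} into the formula of Theorem~\ref{OHNOthe2}, apply Lemma~\ref{OHNOstufflelem} with $N=n_1-1$ to each product, and absorb the outer index via $\sum_{n_1\ge 1}\zeta_{n_1-1}(\mathbf{f})/n_1^{a}=\zeta(a,\mathbf{f})$ with $a=k_1+1$ resp.\ $a=k_1+1+\ell_1$, which is exactly the paper's argument. If anything, you are slightly more careful than the paper, since you spell out the index-absorption identity, the degenerate case of an empty string $\mathbf{k}$, and the convergence justification ($a\ge 2$) that the paper leaves implicit.
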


\end{document}